\newtheorem{thm}{Theorem}[section]
\newtheorem{cor}[thm]{Corollary}
\newtheorem{lem}[thm]{Lemma}
\theoremstyle{definition}
\theoremstyle{remark}
\numberwithin{equation}{section}
\newcommand{\G}{{\mathcal G}}
\newcommand{\N}{{\mathcal N}}
\newcommand{\A}{{\mathcal A}}
\newcommand{\cP}{{\mathcal P}}
\newcommand{\T}{{\mathcal T}}
\newcommand{\Z}{{\mathcal Z}}
\newcommand{\blue}{\textcolor{black}}
\begin{document}

\begin{frontmatter}

\title
{New Characterisations of Tree-Based Networks and Proximity Measures}

\author{Andrew Francis 
}
\address{Centre for Research in Mathematics, School of Computing, Engineering and Mathematics, Western Sydney University, Sydney, Australia}
\ead{a.francis@westernsydney.edu.au}

\author{Charles Semple}
\address{School of Mathematics and Statistics, University of Canterbury, Christchurch, New Zealand}
\ead{charles.semple@canterbury.ac.nz}

\author{Mike Steel\corref{mycorrespondingauthor}}
\cortext[mycorrespondingauthor]{Corresponding author}
\address{School of Mathematics and Statistics, University of Canterbury, Christchurch, New Zealand}
\ead{mike.steel@canterbury.ac.nz}

\begin{abstract}
Phylogenetic networks are a type of directed acyclic graph that represent how a set $X$ of present-day species are descended from a common ancestor by  processes of speciation and reticulate evolution. In the absence of reticulate evolution, such networks are simply phylogenetic (evolutionary) trees. Moreover,    phylogenetic networks that are not trees can sometimes be represented as phylogenetic trees with additional directed edges placed between their edges. Such networks are called {\em tree-based}, and the class of phylogenetic networks that are tree-based has recently been characterised. In this paper,  we establish a number of new characterisations of tree-based networks in terms of path partitions and antichains (in the spirit of  Dilworth's theorem), as well as via  matchings in a bipartite graph.   We also show that a temporal network is tree-based if and only if it satisfies an antichain-to-leaf condition. In the second part of the paper, we define three indices that measure the extent to which an arbitrary phylogenetic network deviates from being tree-based. We describe how these three indices can be described exactly and computed efficiently using classical results concerning maximum-sized matchings in bipartite graphs.
\end{abstract}

\begin{keyword}
Phylogenetic network \sep tree-based network \sep antichain \sep path partition \sep Dilworth's theorem
\end{keyword}

\end{frontmatter}


\section{Introduction}
 
Phylogenetic networks are of increasing interest in the literature as they allow for the representation of reticulate (non-tree-like) processes in evolution. From a mathematical perspective, there are several particularly attractive classes of phylogenetic networks. One of those classes is tree-based networks. Intuitively, a phylogenetic network is tree-based if it can be obtained from a phylogenetic tree $\T$ by simply adding edges whose end-vertices subdivide edges of $\T$. Formalised and studied in~\cite{fra15}, tree-based networks have since been studied in a number of recent papers~\cite{ana16, hay16, jet16, sem16, zha16}, in a variety of contexts.

In this paper, we establish several new characterisations of tree-based networks. These characterisations are based on antichains, path partitions, and matchings in bipartite graphs, and complement the previous characterisations based on bipartite matchings~\cite{jet16, zha16}. Furthermore, with the aid of these characterisations, we explore indices quantifying the closeness of an arbitrary phylogenetic network $\N$ to being tree-based. Each of the considered indices is computable in time polynomial in the size of $\N$, \blue{that is, in the number of vertices in $\N$,} by finding a maximum-size matching in certain bipartite graphs based on $\N$.

The paper is organised as follows. The rest of the introduction contains some formal definitions and previous results. In Section~\ref{characterisations}, we state the new characterisations of tree-based networks as well as a characterisation of tree-based networks within the class of temporal networks. Except for one characterisation which is proved in Section~\ref{deviation}, the proofs of these characterisations are given in Section~\ref{proofs}. In Section~\ref{deviation}, we consider three indices quantifying the extent to which an arbitrary phylogenetic network is tree-based. We end the paper with a brief conclusion and further questions in Section~\ref{ending}.


\subsection{Definitions}

Throughout the paper, $X$ denotes a non-empty finite set. A {\em phylogenetic network $\N$ on $X$} is a rooted acyclic digraph with no edges in parallel and satisfying the following properties:
\begin{enumerate}[(i)]
\item the (unique) root has out-degree two;

\item a vertex with out-degree zero has in-degree one, and the set of vertices with out-degree zero is $X$; and

\item all other vertices either have in-degree one and out-degree two, or in-degree two and out-degree one.
\end{enumerate}
For technical reasons, if $|X|=1$, we additionally allow $\N$ to consist of the single vertex in $X$. The vertices of $\N$ with out-degree zero are referred to as {\em leaves}. Furthermore, the vertices of $\N$ with in-degree two and out-degree one are {\em reticulations}, while the root together with the vertices with in-degree one and out-degree two are {\em tree vertices}. The edges directed into a reticulation are {\em reticulation edges}; all other edges are {\em tree edges}. In the literature, a phylogenetic network as defined here is sometimes called a {\em binary} phylogenetic network. \blue{A {\em rooted (binary) phylogenetic $X$-tree} is a phylogenetic network on $X$ with no reticulations.}

A phylogenetic network $\N = (V,E)$ on $X$ is a {\em tree-based network} if $\N$ has a rooted spanning tree $(V, E')$, where $E'\subseteq E$, that has all its leaves in $X$. We refer to this spanning tree as a {\em base tree} for $\N$. The initial study of tree-based networks~\cite{fra15} included the presentation of a polynomial-time algorithm (based on a characterisation involving 2-SAT) for determining whether or not an arbitrary phylogenetic network is tree-based and, if so, constructing a base tree for it. Fig.~\ref{f:killer} shows an example of a phylogenetic network that is tree-based, and one that is not. As with all drawings of phylogenetic networks in this paper, edges are directed down the page \blue{and, thus, away from from the root}.

\begin{figure}
\begin{center}
\includegraphics[width=11cm]{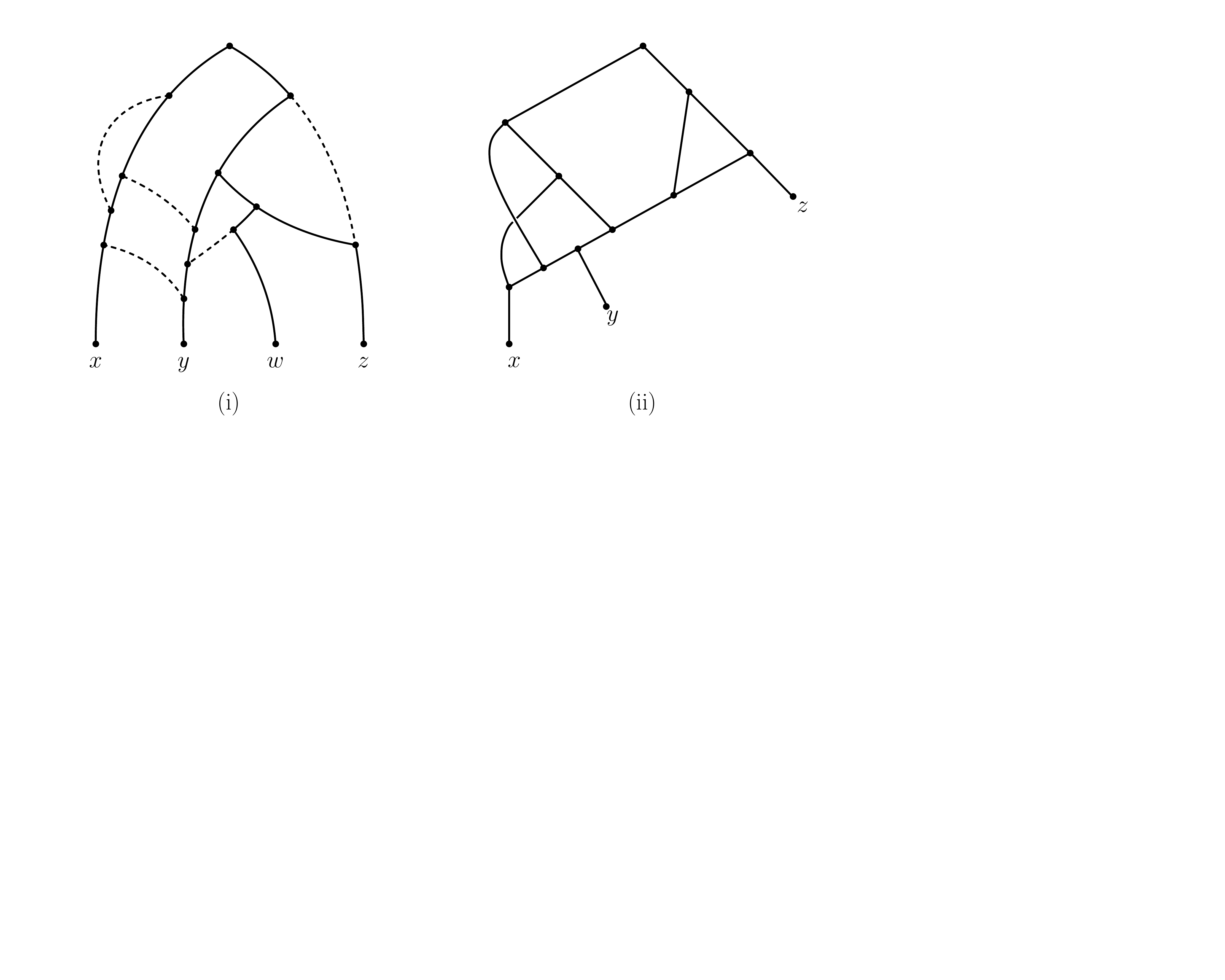}
\caption{Examples of (i) a tree-based network, showing a base tree (\blue{solid} edges), and (ii) a non-tree-based network.}
\label{f:killer}
\end{center}
\end{figure}

\blue{Lastly, in various places in the paper, we use the following operation. Let $G$ be a directed graph and let $e$ be an edge of $G$. The operation of deleting $e$, adding a new vertex $x$, and adjoining $x$ to each of the end-vertices of $e$ (orienting the new edges in the original direction of $e$) is to {\em subdivide $e$}. Any directed graph obtained from $G$ by a sequence of edge subdivisions is called a {\em subdivision} of $G$.}

\subsection{Previous results}
\label{sec:prev}

Let  $\N$ be a phylogenetic network on $X$. Let $T$ be the set of tree vertices in $\N$ that are parents of a reticulation and let $R$ be the set of reticulations in $\N$. Zhang~\cite{zha16} defined the following bipartite graph to characterise tree-based networks. Let $\Z_{\N}$ be the graph with vertex set $T\cup R$ and edge set
$$\{\{t, r\}: \mbox{$t\in T$, $r\in R$, and $(t, r)$ is an edge in $\N$}\}.$$
In particular, Zhang~\cite{zha16} established the following characterisations.

\begin{thm}
Let $\N$ be a phylogenetic network. Then the following are equivalent:
\begin{enumerate}[{\rm (i)}]
\item $\N$ is tree-based.

\item The bipartite graph $\Z_{\N}$ has a matching such that each reticulation is matched.

\item The bipartite graph $\Z_{\N}$ has no maximal path that starts and ends with reticulations.
\end{enumerate} 
\label{zhang}
\end{thm}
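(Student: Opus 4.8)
The plan is to prove the two equivalences (i)$\Leftrightarrow$(iii) and (ii)$\Leftrightarrow$(iii), which together give the theorem, using $\Z_{\N}$ as the common pivot. The first thing I would record is a structural fact: $\Z_{\N}$ has maximum degree two. Indeed, a vertex of $T$ has out-degree two in $\N$ and so is adjacent to at most two reticulations, while a reticulation has in-degree two and so has at most two parents in $T$. Hence $\Z_{\N}$ is a disjoint union of simple paths and (necessarily even) cycles, and its maximal paths are exactly the path-components together with the Hamiltonian paths of the cycles; the latter have their two endpoints adjacent in the cycle, hence in opposite parts, so a cycle never yields a maximal path that starts and ends with a reticulation.

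With this in hand, (ii)$\Leftrightarrow$(iii) is elementary matching theory, component by component. A cycle alternates between $T$ and $R$ and admits a perfect matching, so all its reticulations can be matched; a path has an $R$-saturating matching precisely when it does not contain strictly more $R$-vertices than $T$-vertices, i.e. precisely when its two endpoints are not both reticulations (an isolated reticulation being the degenerate case). Thus $\Z_{\N}$ has a matching saturating $R$ if and only if no maximal path begins and ends with a reticulation, which is exactly condition (iii).

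The substantive work is linking these to tree-basedness, and the key reduction is to reformulate (i) locally. In any rooted spanning tree inside $\N$ every non-root vertex has in-degree one, so every edge entering a tree vertex or a leaf is forced to be kept, and for each reticulation exactly one of its two incoming edges is retained; choosing a base tree therefore amounts to choosing, for each reticulation $r$, one parent $\pi(r)$. The requirement that all leaves lie in $X$ becomes two ``no-stranding'' conditions: (a) every tree vertex whose two children are both reticulations must be chosen by at least one of them, and (b) every reticulation $r$ that is a parent of a reticulation $r'$ must be chosen by $r'$, since $r\to r'$ is the only out-edge of $r$. I would then prove (i)$\Rightarrow$(iii) by contraposition. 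Along a maximal path $r_0,t_1,r_1,\dots,t_k,r_k$ whose ends are reticulations, each endpoint has degree one in $\Z_{\N}$, hence a reticulation parent off the path, so (b) forces $\pi(r_0)\neq t_1$ and $\pi(r_k)\neq t_k$; condition (a) at the internal tree vertices then propagates the forced equalities $\pi(r_1)=t_1,\pi(r_2)=t_2,\dots,\pi(r_k)=t_k$ down the path until the last one clashes with $\pi(r_k)\neq t_k$, so no valid $\pi$ exists and $\N$ is not tree-based.

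The main obstacle — and the reason the base-tree/matching correspondence is not a literal bijection — is exactly condition (b): a saturating matching may match a reticulation to a tree-vertex parent even when tree-basedness forces it onto a reticulation parent instead (reticulation ``chains''). I would therefore prove (iii)$\Rightarrow$(i) by a direct construction rather than by reusing the matching. First set $\pi(r')=r$ for every reticulation $r'$ with a reticulation parent $r$; this is well defined because a reticulation with two reticulation parents would be an isolated vertex of $\Z_{\N}$, a forbidden maximal path, and it secures (b). Then orient each path- and cycle-component of $\Z_{\N}$ so that each remaining (``free'') reticulation selects a tree parent and every endangered tree vertex is chosen. The no-bad-path hypothesis is precisely what guarantees that every endangered tree vertex retains a free child available to cover it, so (a) and (b) can be met simultaneously. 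Checking that the resulting edge set is genuinely a spanning tree with leaves in $X$ (acyclicity and the in-degree count, including the root) is routine and I would dispatch it in a line.
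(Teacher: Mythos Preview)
Your argument is correct, but there is nothing in the paper to compare it against in detail: Theorem~\ref{zhang} is quoted as a prior result of Zhang, not proved. The paper's only remark is that (ii)$\Rightarrow$(iii) is clear and that (iii)$\Rightarrow$(ii) follows from Hall's theorem; it says nothing about how (i) is linked to (ii) or (iii).

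Where a comparison is possible, your route differs from the one the paper indicates. For (ii)$\Leftrightarrow$(iii) you bypass Hall's theorem entirely by first observing that $\Z_{\N}$ has maximum degree two and therefore decomposes into paths and even cycles; the existence of an $R$-saturating matching is then read off component by component. This is more elementary and more constructive than the Hall argument the paper alludes to, and the same structural decomposition is what drives your direct proof of (iii)$\Rightarrow$(i). Your reformulation of tree-basedness as a parent-selection $\pi$ subject to the local constraints (a) and (b), together with the propagation argument along a bad path for (i)$\Rightarrow$(iii), is clean and self-contained; this is essentially the content of Zhang's original proof, which the present paper does not reproduce. One small point: your sequence $r_0,t_1,r_1,\dots,t_k,r_k$ in the (i)$\Rightarrow$(iii) direction tacitly assumes $k\ge 1$, so the degenerate case of an isolated reticulation (both parents reticulations, forcing two incompatible applications of (b)) deserves a separate one-line mention.
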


\noindent The proofs of our main results make use of these characterisations. It is clear that (ii) implies (iii), while the reverse implication relies on Hall's theorem\blue{~\cite{hal35} (see \cite[Theorem~16.4]{bon08})} on matchings in bipartite graphs.

There are several other characterisations of what it means for a phylogenetic network $\N=(V, E)$ to be a tree-based network:
\begin{enumerate}[(i)]
\item The existence of an `admissible' subset of the edges of $\N$ \cite{fra15}.

\item The existence of an independent subset of edges $E'$ of $E$ for which $(V,E-E')$ is a base tree of $\N$ \cite{fra15}.

\item The existence of a matching in a certain bipartite graph whose vertices consist of  `omnians' (vertices whose only children are reticulations) and reticulations \cite{jet16}.
\end{enumerate}
While we mention these characterisations, they will play no further part in the paper.

\section{New Characterisations}
\label{characterisations}

\blue{An {\em antichain} in a directed graph is a subset $S$ of vertices with the property that, for all distinct $u, v\in S$, there is no directed path from $u$ to $v$.} Any tree-based network $\N$ satisfies the {\em antichain-to-leaf property}, which says that for any antichain of $k$ vertices, there \blue{exist} $k$ vertex disjoint paths from the elements of the antichain to the leaves of $\N$~\cite{fra15}. However, this property is not sufficient to ensure a network is tree-based, as the counterexample in Fig.~\ref{f:killer}(ii) illustrates.  
Note that if an arbitrary phylogenetic network satisfies the antichain-to-leaf property, then it satisfies the corresponding property for \emph{edge} disjoint paths, and in fact is equivalent to it via an application of Menger's theorem\blue{~\cite{men27} (see \cite[Theorem~7.16]{bon08})}.

The main result of this section is the following theorem. This theorem provides five properties of a phylogenetic network $\N=(V, E)$ that are equivalent to being tree-based. Four of these properties, (II)--(V), can be viewed as providing different ways to strengthen the antichain-to-leaf property. The final property, (VI), provides another characterisation in terms of bipartite graphs. Let $\G_\N$ denote the bipartite graph whose vertex bipartition is $\{V_1, V_2\}$, where each of $V_1$ and $V_2$ is a copy of $V$, and with an edge joining a vertex $u\in V_1$ and a vertex $v\in V_2$ precisely if $(u, v)$ is an edge in $\N$. To illustrate, consider the phylogenetic network $\N$ shown in Fig.~\ref{f:match}(i). The bipartite graph $\G_{\N}$ is shown in Fig.~\ref{f:match}(ii).

\begin{thm}
\label{t:tbn.char}
Let $\N=(V, E)$ be a phylogenetic network on $X$. The following are equivalent:
\begin{enumerate}[{\rm (I)}]
\item $\N$ is tree-based;

\item $\N$ has an antichain $\A\subseteq V$, and a partition $\Pi$ of $V$ into $|\A|$ chains each of which forms a path in $\N$ ending at a leaf in $X$;

\item For all $U\subseteq V$, there exists a set of vertex disjoint paths in $\N$ each ending at a leaf in $X$ such that each element of $U$ is on exactly one path;

\item There is no pair of subsets $U_1, U_2\subseteq V$ such that $|U_1|>|U_2|$ and
\begin{enumerate}[{\rm (i)}]
\item every path from a vertex in $U_1$ to a vertex in $X$ traverses a vertex in $U_2$, and

\item for $\{i, j\}=\{1, 2\}$, if there is a path from a vertex in $U_i$ to a vertex in $U_i$, then this path traverses a vertex in $U_j$.
\end{enumerate}

\item The vertex set of $\N$ can be partitioned into a set of vertex disjoint paths, each of which ends at a leaf in $X$.

\item The bipartite graph $\G_\N$ has a matching of size $|V|-|X|$.
\end{enumerate}
\end{thm}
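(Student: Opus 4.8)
The plan is to prove Theorem~\ref{t:tbn.char} by establishing a cycle of implications among the six conditions, using the known equivalence (I) $\Leftrightarrow$ (VI-type matching conditions) from Theorem~\ref{zhang} as an anchor, and exploiting the fact that several of these conditions are essentially dual formulations connected by classical min-max theorems (Dilworth, Menger, König/Hall). I would first observe that (V) is the special case $U = V$ of (III), so (III) $\Rightarrow$ (V) is immediate; conversely (V) $\Rightarrow$ (III) should follow by restricting a full path partition to the vertices of $U$ and patching. The genuinely structural content lives in the equivalences (I) $\Leftrightarrow$ (V) $\Leftrightarrow$ (VI) and the Dilworth-style reformulations (II) and (IV).

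Let me sketch the main bridges. For (V) $\Leftrightarrow$ (VI): a partition of $V$ into vertex-disjoint directed paths, each ending at a leaf in $X$, corresponds bijectively to a matching in $\G_\N$. The standard trick is that selecting an edge $(u,v)$ of $\N$ to lie on a path amounts to matching $u \in V_1$ to $v \in V_2$ in $\G_\N$; a set of edges forms a disjoint-path partition of $V$ exactly when each vertex has in-path-degree and out-path-degree at most one, which is precisely the matching condition (each $V_1$-copy and each $V_2$-copy is used at most once). A path partition into paths that terminate at leaves uses exactly $|V| - (\text{number of paths})$ edges, and minimising the number of paths — equivalently maximising the matching — the paths can be forced to end at leaves precisely when the maximum matching has size $|V| - |X|$. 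So (VI) says there are exactly $|X|$ paths, all ending at the $|X|$ leaves, giving (V); and conversely. This is the cleanest link, and I expect it to be a short but careful argument about counting paths versus matched edges.

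For (II) and (IV), I would invoke the dual framing. Condition (II) is a Dilworth-type statement: partition $V$ into chains (directed paths ending at leaves) indexed by an antichain $\A$ of minimum size; the number of chains needed equals the size of a maximum antichain, so (II) is (V) together with the recognition that a minimum path partition has size equal to a maximum antichain in the associated partial order. Condition (IV) is the corresponding no-obstruction (deficiency) statement: it asserts the nonexistence of a König/Hall-type witness $(U_1, U_2)$ certifying that the matching in $\G_\N$ falls short of $|V|-|X|$, so (IV) $\Leftrightarrow$ (VI) should come directly from the defect form of Hall's theorem (or König's theorem) applied to $\G_\N$, with $U_1, U_2$ playing the roles of the deficient set and its neighbourhood translated back into path language via the extra antichain-style condition (ii). Closing the cycle through (I) uses Theorem~\ref{zhang}: a base tree is exactly a spanning tree all of whose leaves lie in $X$, which decomposes into disjoint root-to-leaf paths, matching condition (V).

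**The main obstacle** I anticipate is condition (IV), whose two-part combinatorial description (the ``traverses a vertex in $U_2$'' separator condition together with the symmetric internal-path condition (ii)) must be shown to be precisely the negation of a Hall/König deficiency witness for the target matching size $|V|-|X|$ in $\G_\N$. Translating the bipartite-graph obstruction back into the network's path structure, and verifying that condition (ii) exactly captures the requirement that $U_1$ and $U_2$ each form antichains relative to the relevant separation (so that the sets genuinely certify a shortfall rather than being spuriously large), is where the bookkeeping will be delicate; the other implications I expect to be comparatively routine applications of the min-max theorems once the dictionary between paths in $\N$ and matchings in $\G_\N$ is fixed.
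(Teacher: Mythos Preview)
Your overall architecture is sensible, and the (V)$\Leftrightarrow$(VI) dictionary via matchings in $\G_\N$ is essentially what the paper does (as Lemma~\ref{unmatched} and Corollary~\ref{matchcor}). However, your plan for (IV) has a genuine gap. You propose to obtain the obstruction pair $(U_1,U_2)$ directly from a Hall/K\"onig deficiency witness in $\G_\N$: take $U_1=S\subseteq V_1\setminus X$ with $|N(S)|<|S|$ and set $U_2=N(S)$. This does give condition~(i) and one half of~(ii) (any $U_1$-to-$U_1$ path begins with an edge into $U_2$), but it does \emph{not} in general give the other half: a path from one vertex of $U_2$ to another need not pass through $U_1$, since a vertex $v\in U_2$ may well have children outside $S$. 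The symmetric condition~(ii) is not a formality---it is precisely what drives the alternation count in the (III)$\Rightarrow$(IV) direction---and a raw Hall witness in $\G_\N$ does not supply it.

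The paper sidesteps this by proving $\neg$(I)$\Rightarrow\neg$(IV) via Zhang's \emph{maximal-path} characterisation (Theorem~\ref{zhang}(iii)) rather than via a generic Hall witness. From a maximal path $r_1\,t_1\,r_2\cdots t_{k-1}\,r_k$ in $\Z_\N$, together with the two extra reticulation parents $q,q'$ of $r_1$ and $r_k$, one sets $U_1=\{q,t_1,\dots,t_{k-1},q'\}$ and $U_2=\{r_1,\dots,r_k\}$. By maximality, $U_2$ is exactly the set of \emph{all} children of $U_1$ \emph{and} $U_1$ is exactly the set of \emph{all} parents of $U_2$; this two-sided tightness is what makes both directions of~(ii) hold. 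If you insist on the Hall-based route, you would need an additional refinement step producing a witness with this bilateral neighbourhood property---effectively rederiving Zhang's maximal path---so it is cleaner to invoke Theorem~\ref{zhang}(iii) directly. (A minor secondary point: your claim that ``a base tree decomposes into disjoint root-to-leaf paths'' for (I)$\Rightarrow$(V) is true but not immediate for a tree with branching internal vertices; the paper isolates this as Lemma~\ref{l:trees.M}.)
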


{\bf Remarks:}
\begin{itemize}
\item Property (II) is related to a classical result in combinatorics, namely Dilworth's theorem\blue{~\cite{dil50}}. A version of this theorem states that every finite poset $P$ has an antichain $A$ and a partition $\Pi$ of $P$ into $|A|$ chains \blue{(see \cite[Theorem~19.5]{bon08})}.  If we regard a network as a poset in the usual way, the additional requirement in (II) is that the chains must form paths; one cannot simply `jump over' other vertices.

\noindent It is instructive to see why this fails for the counterexample to the `antichain-to-leaf property' in \cite{fra15}, reproduced in Fig.~\ref{f:killer}(ii).  This network has a maximum-sized antichain of size three (e.g. $\{x,y, z\}$), and so the vertex set can be partitioned into three chains (by Dilworth's theorem).  However, the network cannot be partitioned into three paths. A partition into four paths is shown in Fig.~\ref{f:two_killer}.

\item Notice that the condition in (II) forces $|\A|=|X|$, since each element $x$ in $X$ has to be in exactly one path from $\Pi$. But, $\A$ need not necessarily be equal to $X$, indeed one could have $\A\cap X=\emptyset$.

\item Property (VI) complements the two existing characterisations of tree-based networks via matchings \cite{jet16, zha16}.
\end{itemize}

\begin{figure}
\begin{center}
\includegraphics[width=5.5cm]{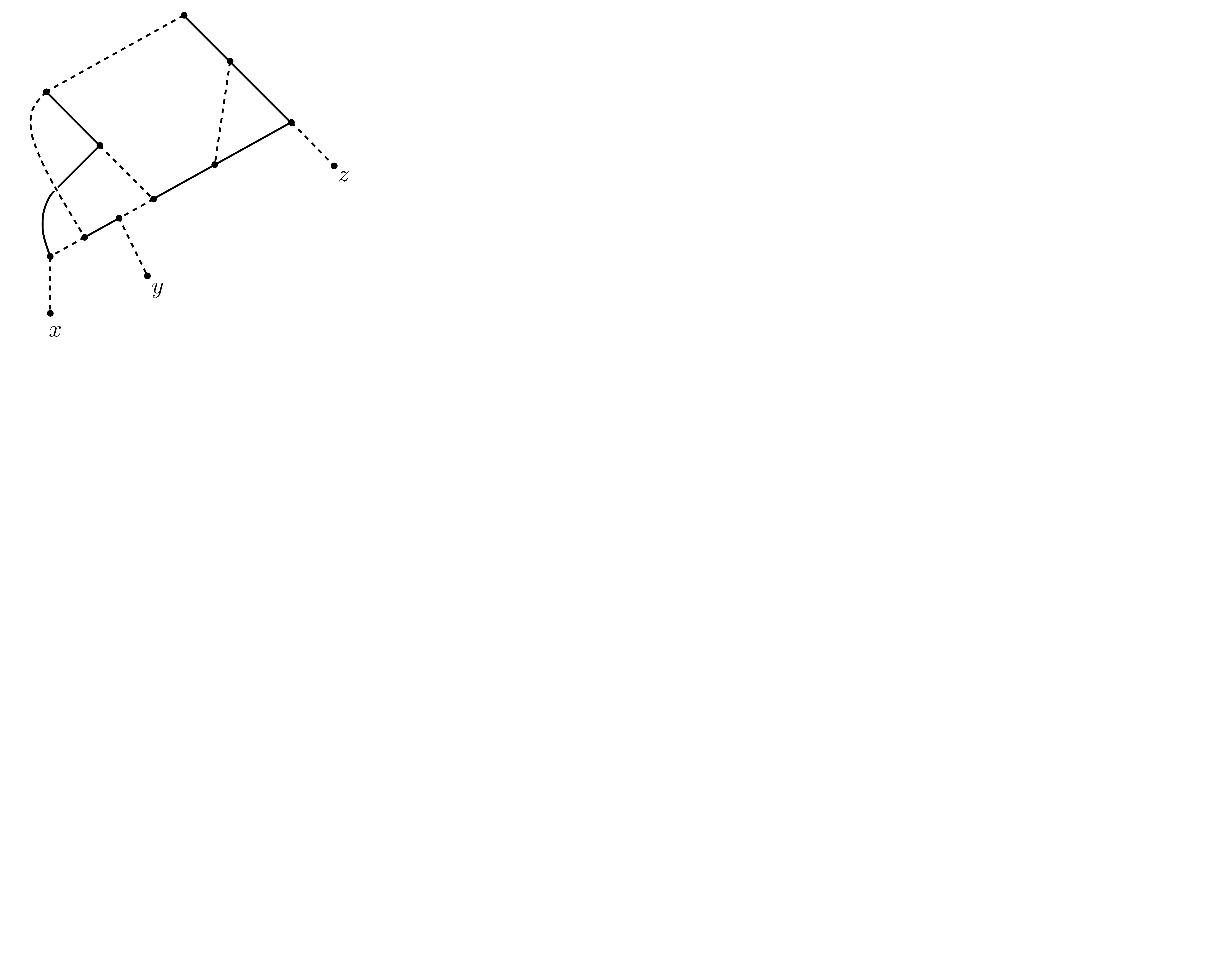}
\caption{For the non-tree-based \blue{network} from Fig.~\ref{f:killer}(ii), the vertex set can be partitioned into four paths (\blue{dashed}), but not into three (the size of the maximum-sized antichain).}
\label{f:two_killer}
\end{center}
\end{figure}

A second result in this section is a characterisation of tree-based networks within the class of `temporal' networks. Introduced in~\cite{bar06}, a phylogenetic network $\N=(V,E)$ is {\em temporal} if there is a map $\lambda: V \rightarrow {\mathbb R}$ so that $\lambda(u) < \lambda(v)$ for each tree edge $(u, v)$, and $\lambda(u) = \lambda(v)$ for each reticulation edge $(u, v)$, in which case $\lambda$ is a {\em temporal map for $\N$}. As illustrated above, the antichain-to-leaf property is a necessary but not sufficient condition for a phylogenetic network to be tree-based. However, within the class of temporal networks, it is sufficient.

\begin{thm}
\label{t:temporal}
Let $\N$ be a temporal network. Then $\N$ is tree-based if and only if $\N$ satisfies the antichain-to-leaf property.
\end{thm}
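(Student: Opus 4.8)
The forward implication holds for every phylogenetic network and is already recorded in the excerpt (it is the antichain-to-leaf property of tree-based networks established in~\cite{fra15}), so the plan is to concentrate on the converse: if $\N$ is temporal and satisfies the antichain-to-leaf property then $\N$ is tree-based. I would argue the contrapositive, assuming $\N$ is temporal and \emph{not} tree-based and then producing a single antichain that witnesses the failure of the antichain-to-leaf property. By Theorem~\ref{zhang}(iii), since $\N$ is not tree-based the bipartite graph $\Z_\N$ contains a maximal path $P\colon r_0,t_1,r_1,\dots,t_m,r_m$ whose two ends $r_0,r_m$ are reticulations; here the $r_i$ are reticulations, the $t_j$ are tree vertices, and $P$ carries one more reticulation than tree vertex.

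The temporal hypothesis enters through the following localisation. Each $t_j$ on $P$ is a tree vertex both of whose children, $r_{j-1}$ and $r_j$, are reticulations, so both edges leaving $t_j$ are reticulation edges; hence a temporal map $\lambda$ satisfies $\lambda(t_j)=\lambda(r_{j-1})=\lambda(r_j)$. Chaining this equality along $P$ shows that all vertices of $P$ share a common value $\ell$. I would then record the structural fact that any two distinct tree vertices $u,v$ with $\lambda(u)=\lambda(v)$ are incomparable: a directed path from $u$ to $v$ is nondecreasing in $\lambda$, so equality of the endpoints forces every edge on it to be a reticulation edge, which would make the tree vertex $v$ the head of a reticulation edge, a contradiction. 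In particular $\{t_1,\dots,t_m\}$ is an antichain, and more generally the reticulation edges of $\N$ are ``horizontal'' for $\lambda$, so the entire obstruction lives inside the single level $\ell$.

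It remains to convert the level-$\ell$ data into a violation of the antichain-to-leaf property. The reticulations $\{r_0,\dots,r_m\}$ on $P$ already form a Hall-deficient set $S\subseteq R$: by maximality of $P$ every tree-vertex parent of an $r_i$ lies on $P$, so $N(S)\subseteq\{t_1,\dots,t_m\}$ and $|N(S)|\le m<m+1=|S|$. A short incidence count shows the deficiency cannot be carried by tree vertices alone: the $m$ tree vertices emit only $2m$ edges while the $m+1$ reticulations absorb $2(m+1)$ edges, so at least two incoming edges to $S$ originate from reticulation parents at level $\ell$ (equivalently, some reticulation on $P$ is itself the child of a reticulation). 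I would therefore close $S$ upward through the horizontal reticulation edges at level $\ell$ and take the tree-vertex sources of this closure as the antichain $\A$. Because every reticulation has out-degree one, each path leaving $\A$ and reaching a leaf in $X$ is forced down through the reticulations below these sources, and the inequality $|N(S)|<|S|$ is what I would turn into a vertex set $C$ with $|C|<|\A|$ meeting every such path; by Menger's theorem this yields fewer than $|\A|$ vertex-disjoint paths from $\A$ to $X$, contradicting the antichain-to-leaf property.

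The main obstacle is precisely this last translation. The deficiency supplied by Theorem~\ref{zhang} is \emph{upward} (too many reticulations sharing too few parents), whereas the antichain-to-leaf property is a \emph{downward} separation condition, and reticulation parents of reticulations can stack within level $\ell$. The role of temporality is exactly to confine the whole configuration to the single level $\ell$, where reticulation edges are horizontal and the tree vertices form an antichain; this is what should let the horizontal parent-sharing be re-expressed as a downward bottleneck through the out-degree-one reticulations. I expect the delicate point to be verifying that the set $C$ I construct genuinely separates $\A$ from all of $X$ while satisfying $|C|<|\A|$, in particular handling the upward closure through stacked reticulations at level $\ell$ so that the counting inequality is preserved.
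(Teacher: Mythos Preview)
Your framework matches the paper's exactly up through the point where you establish that the entire maximal $\Z_\N$-path lives on a single $\lambda$-level and that the tree vertices $t_1,\dots,t_m$ are pairwise incomparable. Where you diverge is in the final step, and there you are making the argument harder than it needs to be; the ``upward closure through horizontal reticulation edges'' you propose is vague, and you yourself flag that turning the upward Hall deficiency into a downward cut with the right cardinality is the unresolved obstacle.

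The paper avoids this entirely by a much more direct choice of antichain. Rather than closing upward, it simply takes the set of \emph{all} parents of $\{r_0,\dots,r_m\}$. By maximality of the path in $\Z_\N$, the parent of $r_0$ other than $t_1$ is a reticulation $q$, and the parent of $r_m$ other than $t_m$ is a reticulation $q'$; the interior $r_i$ already have both parents among the $t_j$. Hence the full parent set is $U=\{q,t_1,\dots,t_m,q'\}$, of size $m+2$. Since every child of every vertex in $U$ lies in $\{r_0,\dots,r_m\}$, any path from $U$ to $X$ must enter that set of $m+1$ reticulations --- so if $U$ is an antichain, the pigeonhole gives the violation immediately, with no Menger and no auxiliary cut set $C$.

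The only remaining wrinkle is that $U$ may fail to be an antichain, precisely because $q$ and $q'$ are reticulations (your incomparability argument for tree vertices does not cover them). The paper handles this by noting that any directed path inside level $\ell$ consists solely of reticulation edges, so its terminal vertex in $U$ must be $q$ or $q'$; one then deletes whichever of $q,q'$ is reachable from some $r_i$ and checks that the corresponding $r_i$ becomes redundant in the bottleneck, so the count still works. This is a short case analysis rather than a general closure construction. Your proposed route might be completable, but it is strictly more work than needed.
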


\section{Proof of Theorems~\ref{t:tbn.char} and \ref{t:temporal}}
\label{proofs}

In this section, we establish the equivalence of (I)--(V) in Theorem~\ref{t:tbn.char} as well as Theorem~\ref{t:temporal}. The proof of the equivalence of (I) and (VI) in Theorem~\ref{t:tbn.char} is done independently as Corollary~\ref{matchcor}. We begin with a lemma.

\begin{lem}
\label{l:trees.M}
Let $T$ be a subdivision of a rooted binary tree with vertex set $V$. Then the following property holds:
\begin{enumerate}[{\rm (P)}]
\item For any non-empty subset $U$ of $V$ there exists a set of vertex disjoint (directed) paths in $T$ each of which ends at a leaf of $T$ and each vertex in $U$ lies on exactly one path.
\end{enumerate}
\end{lem}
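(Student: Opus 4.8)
The plan is to prove, by induction on $|V|$, a slight strengthening of (P): namely that the vertex-disjoint paths can always be chosen so that, in addition, the root $\rho$ of $T$ lies on one of them. Call this property $(\mathrm{P}^{+})$. Since $(\mathrm{P}^{+})$ trivially implies (P), this suffices. I would prove $(\mathrm{P}^{+})$ for every $U\subseteq V$ (including $U=\emptyset$), and for the slightly larger class of rooted trees in which every vertex has out-degree at most $2$ and the vertices of out-degree zero are precisely the leaves. This class contains every subdivision of a rooted binary tree and, crucially, is closed under passing to the subtree rooted at any vertex, which is what makes the induction self-contained.

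The first point to record is that, because all edges are directed away from the root, any directed path that ends at a leaf and passes through $\rho$ must in fact \emph{begin} at $\rho$; more generally, a path through the root of a subtree starts there. This is the observation that lets one prepend a root to an inductively obtained path. The base case $|V|=1$ is immediate: $T$ is a single leaf $\rho$, and the single trivial path $(\rho)$ works for $U\in\{\emptyset,\{\rho\}\}$.

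For the inductive step I would split on the out-degree of $\rho$. If $\rho$ has out-degree $1$ with child $c$, apply $(\mathrm{P}^{+})$ to the subtree rooted at $c$ with the set $U\setminus\{\rho\}$; the path through $c$ begins at $c$, so prepending $\rho$ yields a valid family covering $U$ with $\rho$ on a path. If $\rho$ has out-degree $2$ with children $a$ and $b$, apply $(\mathrm{P}^{+})$ to each of the two subtrees, with $U$ restricted to each; this produces disjoint families, one containing a path that begins at $a$ and one beginning at $b$. Prepending $\rho$ to the path beginning at $a$ (and keeping everything else) gives vertex-disjoint paths that cover $U$ and place $\rho$ on a path. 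In every case the paths remain vertex-disjoint because the two subtrees share no vertices and $\rho$ lies in neither.

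The main thing to get right---rather than a genuine obstacle---is the exact reading of ``each vertex in $U$ lies on exactly one path''. This does not forbid two comparable vertices of $U$ from lying on a common path; ``exactly one'' only rules out a $U$-vertex being uncovered or (impossibly, given disjointness) doubly covered. Recognising this is what makes the statement true even along long chains of subdivision vertices, and it is precisely what allows the recursion of the out-degree-$1$ case to absorb $\rho$ into the existing path through $c$. The only care needed is to verify the closure of the working class under the subtree operation and that out-degree-$1$ roots, which arise exactly at subdivision vertices, are handled by the dedicated case above.
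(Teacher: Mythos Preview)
Your proposal is correct and takes a genuinely different route from the paper. The paper argues bottom-up: it observes that $T$ must contain either a leaf whose parent has total degree~$2$ or a cherry, deletes one such leaf, applies the induction hypothesis to the smaller tree, and then extends the resulting path system by reattaching the deleted leaf (either by extending the path that ends at the now-leaf parent, or by adding a trivial one-vertex path). No strengthening of (P) is needed. Your argument instead recurses at the root: you split on the root's out-degree, apply the hypothesis to the one or two subtrees, and prepend the root to the path through a child. This forces the strengthening $(\mathrm{P}^{+})$---so that some returned path is guaranteed to begin at the child---and forces the enlargement to all rooted trees with out-degrees at most~$2$ so that subtrees remain in the class; you correctly identified both needs. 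What your approach buys is a clean, uniform recursion with no need to locate a special leaf configuration; the paper's approach avoids the auxiliary strengthening but must justify that one of the two leaf configurations is always present and handle each separately.
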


\begin{proof}
We apply induction on the number $n$ of vertices of $T$. For $n=1$, (P) trivially holds. Suppose that $n\ge 2$ and (P) holds for all subdivisions of a rooted binary tree with at most $n-1$ vertices. Let $U$ be an arbitrary subset of vertices of $T$. Since $n\ge 2$, it follows that $T$ either has
\begin{enumerate}[(i)]
\item a leaf $x$ whose parent, $u$ say, has degree~$2$, or

\item a vertex $v$ that is a parent of two leaves, $x$ and $y$ say.
\end{enumerate}
We establish the induction hypothesis in each case, starting with (i).

For (i), let $T'$ be the subdivision of a rooted binary tree obtained from $T$ by deleting $x$ and its incident edge, so that $u$ is now a leaf of $T'$. Let
\begin{align*}
U' =
\begin{cases}
U, & \text{if $U$ does not contain $x$;} \\
U-\{x\}, & \text{if $U$ contains $x$ and also contains $u$;} \\
(U-\{x\})\cup \{u\}, & \text{if $U$ contains $x$ but not $u$.}
\end{cases}
\end{align*}
Observe that $U'$ is a subset of vertices of $T'$. Therefore, as $T'$ has $n-1$ vertices, it follows by induction that (P) holds and so there is a set of disjoint paths in $T'$ each of which ends at a leaf of $T'$ and each vertex in $U$ lies on exactly one path. Now one of these paths ends at $u$. Replacing this path with the one that extends it to end at $x$ gives a set of vertex disjoint paths in $T$, each of which ends at a leaf of $T$, and each vertex in $U$ lies on exactly one path. Thus the lemma holds for (i).

Now consider (ii). Let $T'$ be the subdivision of a rooted binary tree obtained from $T$ by deleting $y$ and its incident edge. Note that $T'$ has $n-1$ vertices. If $U$ does not contain $y$, then let $U'=U$. By induction, there is a set of vertex disjoint paths in $T'$ each of which ends at a leaf of $T'$ and each vertex in $U'$ lies on exactly one path. This set of paths also \blue{works} for $U$ in $T$. On the other hand, if $U$ does contain $y$, then let $U'=U-\{y\}$. By induction, there is a set of at most $|U'|=|U|-1$ vertex disjoint paths in $T'$ each of which ends at a leaf of $T'$ and each vertex in $U'$ lies on exactly one path. Adding the (trivial) path consisting of just $y$ to this set of paths, we obtain a set of vertex disjoint paths in $T$ each ending at a leaf of $T$ and each vertex in $U$ lying on exactly one path. This completes the proof for (ii).
\end{proof}

\begin{proof}[Proof of equivalence of {\rm (I)--(V)} in Theorem~\ref{t:tbn.char}]
We establish the following implications between the stated conditions on $\N$: (I) implies (II), (II) implies (III), (III) implies (IV), and (IV) implies (I), which together show the equivalence of (I)-(IV).  We then show that (III) implies (V) and (V) implies (II).

\noindent (I) $\Rightarrow$ (II). Suppose that $\N$ is tree-based, and $T$ is a base tree of $\N$. Let $U= V-X$. Then, by Lemma~\ref{l:trees.M}, there is a collection of vertex disjoint paths in $T$ each ending at a vertex in $X$ and each vertex in $\N$ lying on exactly one path. Choosing $\A=X$, the vertex sets of these paths form the blocks of the required partition $\Pi$ of $V$.

\noindent (II) $\Rightarrow$ (III). Suppose that $\Pi$ is a partition of $V$ with the property that each block in $\Pi$ is the vertex set of a path in $\N$ ending at a leaf in $X$. Let $U$ be a subset of vertices of $\N$. Then $\Pi$ provides a set of vertex disjoint paths each ending at a leaf in $X$ and with each vertex in $U$ on exactly one path.

\noindent (III) $\Rightarrow$ (IV). For this implication, we prove the contrapositive. Suppose that property (IV) is false for $\N$. Then there exist subsets $U_1$ and $U_2$ of $V$ with $|U_1| > |U_2|$ that satisfy the two traversal conditions (i) and (ii). We show $U=U_1$ fails to satisfy property (III). First observe that if $P$ is a path in $\N$ ending at $X$, then $P$ contains at least as many vertices of $U_2$ as $U_1$. To see this, observe that because of the traversal conditions (i) and (ii), as we move along $P$, we alternate between vertices in $U_1$ and vertices $U_2$. That is, for $\{i, j\}=\{1, 2\}$, if we traverse a vertex in $U_i$, then the next vertex we traverse in $U_i\cup U_j$ is a vertex in $U_j$. Moreover, for each vertex in $U_1$ on $P$, there is a subsequent vertex in $U_2$ on $P$.  Hence there are at least as many vertices of $U_2$ as $U_1$ in $P$. Thus any set of vertex disjoint paths in $\N$ each ending at a leaf in $X$ collectively contains at least as many vertices in $U_2$ as $U_1$. But then it is not possible for such a set of paths to collectively contain all the vertices in $U_1$ since $|U_2| < |U_1|$. So $U=U_1$ violates property (III).

\noindent (IV) $\Rightarrow$ (I). Again, we prove the contrapositive. Suppose that $\N$ is not tree-based. Then, by Theorem~\ref{zhang}, there is a maximal path in $\Z_{\N}$ that starts and ends in $R$. Writing this path as $r_1\, t_1\, r_2\, \cdots\, t_{k-1}\, r_k$, let $q$ be the parent of $r_1$ that is not $t_1$ and let $q'$ be the parent of $r_k$ that is not $t_{k-1}$. Since the path is maximal, both $q$ and $q'$ are reticulations of $\N$. Let $U_1 = \{q, t_1, t_2, \ldots, t_{k-1}, q'\}$ and $U_2 = \{r_1, r_2, \ldots, r_k\}$. These sets have the following properties:
\begin{enumerate}[(1)]
\item $|U_1|>|U_2|$,

\item $U_1$ is the set of all parents of all vertices in $U_2$, and

\item $U_2$ is the set of all children of all vertices in $U_1$.
\end{enumerate}
Now (2) implies that every path from a vertex in $U_1$ to a vertex in $X$ traverses a vertex in $U_2$. Furthermore, (2) and (3) imply that, if there is a path from a vertex in $U_1$ to another vertex in $U_1$, then this path traverses a vertex in $U_2$. Similarly, if there is a path from a vertex in $U_2$ to another vertex in $U_2$, then this path traverses a vertex in $U_1$. It follows that $U_1$ and $U_2$ provide an instance for which property (IV) fails.

\noindent (III) $\Rightarrow$ (V). Taking $U=V$ in property (III) immediately gives a path system satisfying property (V).

\noindent (V) $\Rightarrow$ (II). Suppose $P$ is a set of paths satisfying property (V). Taking $\A = X$ and $\Pi = P$ gives an antichain and partition of $V$ into $|\A|$ chains that satisfies property (II).
\end{proof}

\begin{proof}[Proof of Theorem~\ref{t:temporal}]
The `only if' direction holds even without the temporal condition, and was established in \cite{fra15}. It also follows from the equivalence of (I) and (III) in Theorem~\ref{t:tbn.char}. For the `if' direction, suppose that $\N$ is temporal but not tree-based. We will show that $\N$ does not possess the antichain-to-leaf property.

Since $\N$ is not tree-based, it follows by Theorem~\ref{zhang} that the bipartite graph $\Z_{\N}$ contains a maximal path
$$r_1\, t_1\, r_2\, \cdots\, t_{k-1}\, r_k$$
that starts and ends in $R$. If $k=1$, then both parents, $q$ and $q'$ say, of $r_1$ are reticulations, in which case $U=\{q, q'\}$ is an antichain in $\N$ that violates the antichain-to-leaf property. Thus we may assume that $k\ge 2$.

Let $q$ be the parent of $r_1$ that is not $t_1$ and let $q'$ be the parent of $r_k$ that is not $t_{k-1}$. Since the path is maximal, both $q$ and $q'$ are reticulations in $\N$. Let $U= \{q, t_1, t_2, \ldots, t_{k-1}, q'\}$. Since $\N$ is temporal, there is a temporal map $\lambda$ for $\N$ which necessarily gives
$$\lambda(q)=\lambda(r_1)=\lambda(t_1)=\lambda(r_2)= \cdots = \lambda(t_{k-1}) = \lambda(r_k)=  \lambda(q'),$$
and so $\lambda$ is constant on $U$. If $U$ is an antichain, then $U$  violates the antichain-to-leaf property, since any set of paths that connects the $k+1$ vertices in $U$ to the leafs in $X$ need to pass through the $k$ vertices in $\{r_1, r_2 \ldots, r_k\}$ and so these paths cannot be disjoint.

Therefore, suppose that $U$ is not an antichain. Then there is a directed path $P$ in $\N$ from a vertex $u\in U$ to another vertex $u'\in U$. Moreover, every edge in $P$ must be a reticulation edge of $\N$. Otherwise, if $P$ contains a tree edge, then $\lambda(u') > \lambda(u)$, contradicting the constancy of $\lambda$ on $U$.  In particular, the only possible tree \emph{vertex} of $\N$ in $P$ is the first vertex. Thus $P$ must include $q$ or $q'$. If $q$ (resp.\ $q'$) can be reached by a directed path from a vertex in $\{r_1, r_2, \ldots, r_k\}$, denote this vertex by $r_q$ (resp.\ $r_{q'}$). It is easily checked that $r_q\neq r_{q'}$. Now let
$$U'=
\begin{cases}
U-\{q\}, & \mbox{if $r_q$ exists}; \\
U-\{q'\}, & \mbox{if $r_{q'}$ exists}; \\
U-\{q, q'\}, & \mbox{if $r_q$ and $r_{q'}$ exist}.
\end{cases}
$$
The set $U'$ is an antichain of size $k$ or $k-1$. Now any path in $\N$ that connects a vertex in $U'$ with a vertex in $X$ must traverse a vertex in $\{r_1, r_2, \ldots, r_k\}$. But if $r_q$ exists, then any path traversing $r_q$ must also traverse $r_1$. Similarly, if $r_{q'}$ exists, then any path traversing $r_{q'}$ must also traverse $r_{k-1}$. In all possibilities for $U'$, it follows that $U'$ does not satisfy the antichain-to-leaf property.
\end{proof}

\section{Measures of Deviation}
\label{deviation}

The concept of being tree-based is an `all-or-nothing' property. In this section, we consider computable indices associated with a phylogenetic network $\N$ which are each zero if and only if $\N$ is tree-based. These indices provide, more generally, some measure of how close an arbitrary phylogenetic network is to being tree-based.

Let $\N=(V, E)$ be a phylogenetic network on $X$. Consider the operation of adjoining a new leaf $y$ to $\N$ by subdividing an edge of $\N$ with a new vertex, $u$ say, and adding the edge $(u, y)$. Observe that $u$ is a tree vertex in the resulting network. We refer to this operation as {\em attaching a new leaf} to $\N$. The three measures we consider are as follows:
\begin{enumerate}[(i)]
\item The minimum number $l(\N)$ of leaves in \blue{$V\setminus X$} that must be present as leaves in a rooted spanning tree of $\N$.

\item The minimum number \blue{$p(\N)=d(\N)-|X|$, where $d(\N)$ is the smallest number} of vertex disjoint paths that partition the vertices of $\N$.

\item The minimum number $t(\N)$ of leaves that need to be attached to $\N$ so the resulting network is tree-based.

\end{enumerate}
Each of these measures is non-negative and well defined. To see that (iii) is well defined, attach a new leaf to each reticulation edge in $\N$. It follows by Theorem~\ref{zhang} that the resulting network is tree-based. Moreover, each of these measures equal zero if and only if $\N$ is tree-based. For (ii), this relies on one direction of the equivalence of (I) and (II) in Theorem~\ref{t:tbn.char}.

We will show that each of the measures are computable in time polynomial in the size of $\N$. \blue{Unexpectedly,} it turns out that \blue{(i), (ii) and (iii)} are identical and can be computed by finding a maximum-sized matching in $\G_{\N}$.


Recall that, for a phylogenetic network $N=(V, E)$, the bipartite graph $\G_{\N}$ has vertex bipartition $\{V_1, V_2\}$, where $V_1$ and $V_2$ are copies of $V$, and an edge joining a vertex $u\in V_1$ with a vertex $v\in V_2$ precisely if $(u, v)$ is an edge in $\N$. Relative to a maximum-sized matching of $\G_{\N}$, let $u(\G_{\N})$ denote the number of unmatched vertices of $V_1$. Note that, as each of the elements in $X$ are isolated vertices in $V_1$, each of these elements is unmatched regardless of the matching. The first result of this section equates $p(\N)$ to the number vertices unmatched by a maximum-sized matching of $\G_{\N}$. An illustration of the proof of this result is given in Fig.~\ref{f:match}.

\begin{figure}
\begin{center}
\includegraphics[width=8cm]{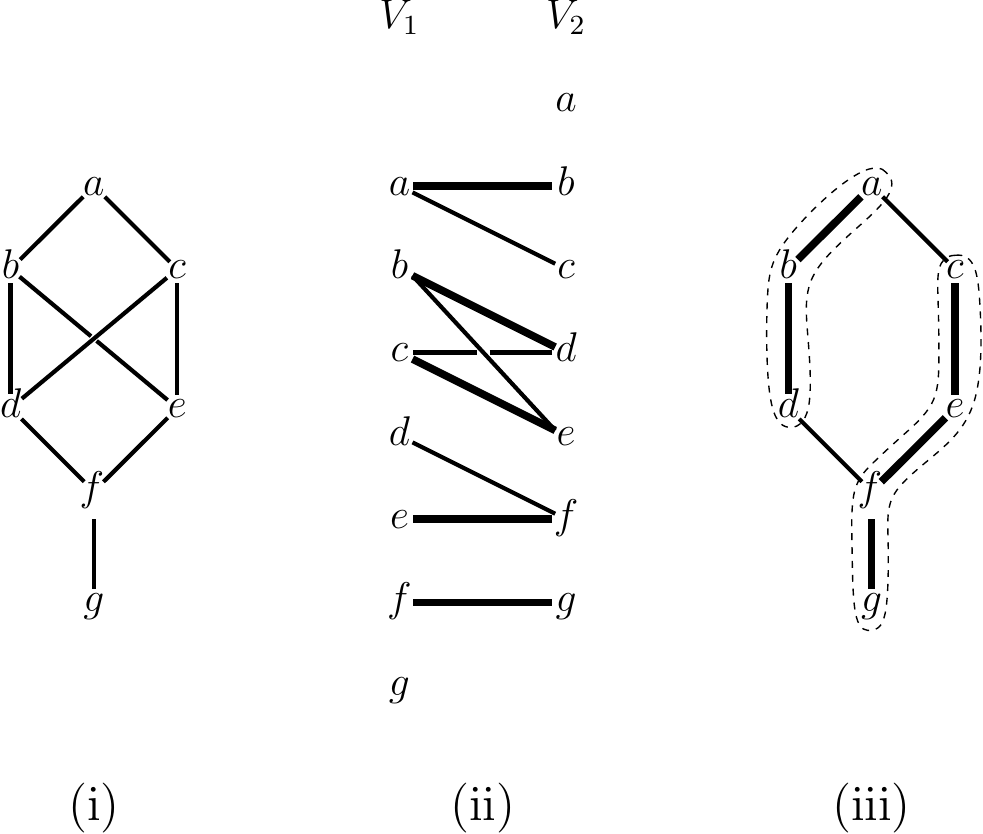}
\caption{(i) A phylogenetic network $\N$ on $X$ that is not tree-based, and (ii) the bipartite graph $\G_{\N}$. A maximum-sized matching of $\G_{\N}$ is indicated by the bold edges, and the two corresponding vertex disjoint paths in $\N$ are indicated in (iii). For this example, \blue{$u(\G_{\N})=2$}, and so $p(\N)=u(\G_{\N})-|X|=2-1=1$. A rooted spanning tree with one leaf not in $X$ is obtained from the two paths in (iii) by adding the edge $(a, c)$.}
\label{f:match}
\end{center}
\end{figure}

\begin{lem}
Let $\N$ be a phylogenetic network on $X$. Then
$$p(\N)=u(\G_{\N})-|X|.$$
\label{unmatched}
\end{lem}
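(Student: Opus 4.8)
The plan is to exploit the classical correspondence between vertex-disjoint path partitions of a directed acyclic graph and matchings in its associated split bipartite graph, which here is precisely $\G_\N$. Since $p(\N)=d(\N)-|X|$ by definition, it suffices to prove that $d(\N)=u(\G_\N)$; that is, that the minimum number of vertex-disjoint paths partitioning $V$ equals the number of vertices of $V_1$ left unmatched by a maximum-sized matching of $\G_\N$. I would denote by $u_{V_1}$ and $u_{V_2}$ the two copies of a vertex $u\in V$ in $\G_\N$.

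First I would turn a path partition into a matching. Given any partition $\Pi$ of $V$ into vertex-disjoint directed paths, select, for each edge $(u,v)$ traversed by a path of $\Pi$, the corresponding edge $\{u_{V_1}, v_{V_2}\}$ of $\G_\N$. Because the paths are vertex-disjoint, each vertex is the tail of at most one traversed edge and the head of at most one, so each vertex contributes to at most one selected edge at its $V_1$-copy and at most one at its $V_2$-copy; hence the selected edges form a matching $M_\Pi$. If $\Pi$ has $d$ paths, then a path on $k$ vertices contributes $k-1$ edges, so summing gives $|M_\Pi|=|V|-d$.

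Conversely, given a matching $M$ of $\G_\N$, let $H$ be the subgraph of $\N$ consisting of exactly those edges $(u,v)$ with $\{u_{V_1}, v_{V_2}\}\in M$. The matching condition forces every vertex of $H$ to have out-degree at most one and in-degree at most one. Here is the main step, and the one I expect to be the only real obstacle: I would invoke the acyclicity of $\N$. A subgraph in which every in- and out-degree is at most one is a disjoint union of directed paths and directed cycles, and since $\N$ is acyclic there are no cycles; thus $H$ is a set of vertex-disjoint directed paths covering all of $V$ (isolated vertices counting as trivial paths). As $H$ has $|V|$ vertices and $|M|$ edges, it comprises $|V|-|M|$ paths. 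Everything apart from this degree-plus-acyclicity argument is routine edge counting.

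Together these constructions show that minimum path number and maximum matching size are complementary: $d(\N)=|V|-\nu(\G_\N)$, where $\nu(\G_\N)$ is the size of a maximum-sized matching. Finally, since $V_1$ is a copy of $V$ and a maximum-sized matching saturates exactly $\nu(\G_\N)$ vertices of $V_1$, we have $u(\G_\N)=|V|-\nu(\G_\N)$; in particular $u(\G_\N)$ is independent of the chosen maximum matching. Combining these equalities yields $d(\N)=u(\G_\N)$, and hence $p(\N)=d(\N)-|X|=u(\G_\N)-|X|$, as required.
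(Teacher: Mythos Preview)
Your proof is correct and follows essentially the same approach as the paper: both exploit the standard correspondence between vertex-disjoint path partitions of $\N$ and matchings of $\G_\N$, and both directions of the inequality rest on the same edge-counting identity $|M|=|V|-(\text{number of paths})$. The only cosmetic difference is that the paper builds the paths from a matching by explicitly following matched edges starting at each unmatched vertex of $V_2$, whereas you argue more succinctly via the in/out-degree bound on the matching-induced subgraph together with acyclicity of $\N$.
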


\begin{proof}
We first show that $p(\N)\le u(\G)-|X|$. Let $M$ be a matching of $\G_{\N}$. Let $U_2$ denote the set of unmatched vertices in $V_2$. For each vertex $u\in U_2$, we recursively construct a directed path $P_u$ in $\N$ as follows. Set $u=u_0$ and initially set $P_u=u_0$. If $u_0$ is unmatched in $V_1$, then terminate the process and set $P_u=u_0$; otherwise, $u_0$ is matched in $V_1$, in which case set $P_u=u_0\, u_1$, where $(u_0, u_1)\in M$. If $u_1$ is unmatched in $V_1$, then terminate the process and set $P_u=u_0\, u_1$. Otherwise, $u_1$ is matched in $V_1$, in which case set $P_u=u_0\, u_1\, u_2$, where $(u_1, u_2)\in M$. Since $\N$ is acyclic, this process eventually terminates with the last vertex, $u_k$ say, added to $P_u$ being unmatched in $V_1$.

Repeating this construction for each vertex in $U_2$, we eventually obtained a collection $\cP=\{P_u: u\in U_2\}$ of directed paths in $\N$. Since $M$ is a matching, the paths in $\cP$ are vertex disjoint. Furthermore, every vertex in $\N$ is on some path in $\cP$. To see this, suppose there is a vertex $v\in V$ not on a path in $\cP$. Clearly, $v$ is matched in $V_2$. But then, by reversing the above construction starting at $v$ in $V_2$, it is easily seen that $v$ is on such a path. Since each vertex in $X$ is unmatched in $V_1$, and noting that the number of paths in $\cP$ equates to the number of unmatched vertices in $V_2$, and therefore the number of unmatched vertices in $V_1$, it follows by choosing $M$ to be of maximum size that
$$p(\N)\le |\cP|-|X|=u(\G_{\N})-|X|.$$

We next show that $p(\N)\ge u(\G_{\N})-|X|$. Now let $\cP$ be a collection of vertex disjoint paths that partitions the vertices of $\N$. Let $M$ be the matching of $\G_{\N}$ obtained from $\cP$ as follows. The edge $(u, v)\in M$ precisely if $u$ and $v$ are consecutive vertices on some path in $\cP$. Since the paths in $\cP$ are vertex disjoint, $M$ is certainly a matching. As every vertex in $\N$ is on some path in $\cP$, the number $u_1$ of unmatched vertices in $V_1$ is the number of paths in $\cP$, each such vertex is the last vertex of some path in $\cP$. Thus, by choosing $\cP$ to be of minimum size,
$$p(\N)=|\cP|-|X|=u_1-|X|\ge u(\G_{\N})-|X|.$$
This completes the proof of the lemma.
\end{proof}

The next corollary establishes the equivalence of (I) and (VI) in Theorem~\ref{t:tbn.char}.

\begin{cor}
\label{matchcor}
A phylogenetic network $\N$ on $X$ is tree-based if and only if $\G_\N$ has a matching of size $|V|-|X|$. 
\end{cor}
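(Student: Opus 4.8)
The plan is to derive this corollary directly from Lemma~\ref{unmatched}, rather than reproving anything from scratch. The key observation is that being tree-based has already been tied to the path-partition measure $p(\N)$: by the equivalence of (I) and (V) in Theorem~\ref{t:tbn.char}, $\N$ is tree-based precisely when its vertex set can be partitioned into vertex-disjoint paths each ending at a leaf in $X$. Since any such path must terminate at one of the $|X|$ leaves, the minimum number $d(\N)$ of vertex-disjoint paths partitioning $V$ can never be smaller than $|X|$, and it equals $|X|$ exactly when such a leaf-ending partition exists. In other words, $\N$ is tree-based if and only if $d(\N)=|X|$, i.e. if and only if $p(\N)=d(\N)-|X|=0$.

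First I would make this reduction explicit: $\N$ is tree-based $\iff p(\N)=0$. One direction is immediate from (I)$\Rightarrow$(V) (a tree-based network admits a partition into $|X|$ leaf-ending paths, forcing $d(\N)\le|X|$, hence $=|X|$). For the converse, if $p(\N)=0$ then $d(\N)=|X|$, so $V$ partitions into exactly $|X|$ vertex-disjoint paths; since each of the $|X|$ leaves lies on some path and no path can contain two leaves (leaves have out-degree zero, so a leaf can only be the terminal vertex of its path), each path ends at a distinct leaf, giving property (V) and hence tree-basedness via (V)$\Rightarrow$(I). This is the step I expect to require the most care, as one must argue that a minimum path partition of size $|X|$ genuinely realises the leaf-ending condition of (V).

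Next I would invoke Lemma~\ref{unmatched}, which gives $p(\N)=u(\G_\N)-|X|$, where $u(\G_\N)$ is the number of $V_1$-vertices left unmatched by a maximum-sized matching of $\G_\N$. Combining with the reduction above, $\N$ is tree-based $\iff u(\G_\N)=|X|$. Finally I would translate the count of unmatched vertices into the size of a maximum matching: since every edge of the matching saturates exactly one vertex of $V_1$ and $|V_1|=|V|$, a maximum matching of $\G_\N$ has size $|V|-u(\G_\N)$. Thus $u(\G_\N)=|X|$ is equivalent to the maximum matching having size $|V|-|X|$, which is exactly condition (VI). The argument is short once Lemma~\ref{unmatched} is in hand; the only genuine content is the bookkeeping that relates unmatched $V_1$-vertices, path counts, and matching size, all of which are routine.
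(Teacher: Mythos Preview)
Your proposal is correct and follows essentially the same route as the paper: both invoke Lemma~\ref{unmatched} to get $p(\N)=u(\G_\N)-|X|$, use the already-established fact that $p(\N)=0$ if and only if $\N$ is tree-based, and then translate between the unmatched count $u(\G_\N)$ and the size of a maximum matching. The only point you glide over is why ``$\G_\N$ has a matching of size $|V|-|X|$'' coincides with ``the \emph{maximum} matching has size $|V|-|X|$''; the paper handles this by observing that the $X$-copies in $V_1$ are isolated and hence always unmatched, so no matching can exceed $|V|-|X|$---you have the ingredient ($d(\N)\ge|X|$) but should state this equivalence explicitly.
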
 

\begin{proof}
Since the elements of $X$ are always unmatched in $V_1$, it follows that $\G_{\N}$ has a matching of size
$|V|-|X|$ if and only if $\G_{\N}$ has a maximum-sized matching of this size. In turn, by Lemma~\ref{unmatched}, the latter holds if and only if  $p(\N) = u(\G_{\N}) -|X|=0$. Noting that $p(\N)=0$ if and only if $\N$ is tree-based completes the proof.
\end{proof}

We now show that \blue{the three} measures are identical. An illustration of \blue{the first part of} the proof of the next theorem is given in Fig.~\ref{f:match}.

\begin{thm}
\label{matchthm}
Let $\N$ be a phylogenetic network $\N$ on $X$. Then
$$l(\N)=p(\N)=t(\N).$$
\end{thm}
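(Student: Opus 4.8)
The plan is to prove the chain of equalities $l(\N)=p(\N)=t(\N)$ by establishing enough inequalities to close a cycle, and to lean heavily on Lemma~\ref{unmatched}, which already ties $p(\N)$ to the matching quantity $u(\G_\N)-|X|$. The cleanest route is to show $p(\N)\le l(\N)$, $l(\N)\le t(\N)$, and $t(\N)\le p(\N)$, which together force all three to coincide.

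First I would tackle $p(\N)\le l(\N)$. Given a rooted spanning tree $T$ of $\N$ realising the minimum, with $l(\N)$ leaves lying outside $X$, I would apply Lemma~\ref{l:trees.M} (with $U=V$) to the subdivision-of-a-binary-tree $T$ to partition $V$ into vertex disjoint paths each ending at a leaf of $T$. The number of such paths equals the number of leaves of $T$, namely $|X|+l(\N)$, so this furnishes a path partition witnessing $d(\N)\le |X|+l(\N)$, i.e.\ $p(\N)=d(\N)-|X|\le l(\N)$. The complementary direction $l(\N)\le p(\N)$ I would get by running Lemma~\ref{unmatched} in reverse: an optimal path partition of size $d(\N)=p(\N)+|X|$ is converted into a spanning forest, and the path endpoints not in $X$ are exactly the leaves outside $X$; reconnecting the path-roots (as in the caption of Fig.~\ref{f:match}, adding an edge like $(a,c)$) into a single rooted spanning tree does not create new out-of-$X$ leaves, so $l(\N)\le p(\N)$. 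Hence $l(\N)=p(\N)$.

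Next I would handle $t(\N)$. For $t(\N)\le p(\N)$, I would take an optimal path partition into $p(\N)+|X|$ paths; the $p(\N)$ paths that do not already end in $X$ end at out-of-$X$ vertices, and attaching a new leaf to the terminal edge of each such path (the \emph{attaching a new leaf} operation) produces a network whose vertex set partitions into paths all ending in the enlarged leaf set, so by the equivalence of (I) and (V) in Theorem~\ref{t:tbn.char} the augmented network is tree-based; thus $t(\N)\le p(\N)$. For the reverse $p(\N)\le t(\N)$, I would start from an optimal set of $t(\N)$ attached leaves making $\N'$ tree-based; by (I)$\Rightarrow$(V) applied to $\N'$, its vertices partition into paths ending at leaves of $\N'$. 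Restricting these paths back to $V$ (deleting the attached leaves and their pendant edges) yields a path partition of $V$ in which at most $t(\N)$ paths fail to end in $X$, so $d(\N)\le |X|+t(\N)$ and $p(\N)\le t(\N)$.

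The main obstacle I anticipate is the $p(\N)\ge t(\N)$ and $l(\N)\ge p(\N)$ bookkeeping, where one must argue that the combinatorial surgery (reconnecting path-roots into a spanning tree, or deleting attached leaves) changes the relevant count by exactly the expected amount and does not secretly create additional out-of-$X$ leaves or merge paths incorrectly. In particular, when reconnecting paths into a single rooted spanning tree for $l(\N)\le p(\N)$, I must verify that the root vertices of the non-root paths can always be joined to earlier paths using edges of $\N$ without introducing cycles or raising the out-of-$X$ leaf count, which uses the acyclicity of $\N$ and the fact that each such root is the head of some edge in $E$. Everything else is a routine translation between path partitions, spanning trees, and leaf-attachments via the already-proven equivalences, so the cycle of inequalities closes to give $l(\N)=p(\N)=t(\N)$.
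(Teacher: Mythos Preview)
Your proposal is correct and follows essentially the same approach as the paper: the paper proves the three-inequality cycle $l(\N)\le p(\N)\le t(\N)\le l(\N)$ using precisely the constructions you describe (joining path-starts to build a spanning tree, applying Lemma~\ref{l:trees.M} or property~(V) to a base tree of $\N'$, and attaching leaves at outgoing edges of the non-$X$ endpoints), whereas you redundantly prove both directions of $l=p$ and of $p=t$. One wording issue to tighten: in your $t(\N)\le p(\N)$ step, ``attaching a new leaf to the terminal edge of each such path'' should read ``to an edge of $\N$ directed \emph{out of} the terminal vertex of each such path''; subdividing the last edge \emph{of the path itself} would not make that path end at a leaf.
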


\begin{proof}

\blue{We first} show that $l(\N)\leq p(\N)$. Suppose that $\Pi$ is the partition of the vertex set $V$ of $\N$ induced by a set of $p(\N)+|X|$ vertex disjoint paths of $\N$. For each $x\in X$, there is a path ending at $x$. Consider the paths $\pi_1, \pi_2, \ldots, \pi_p$ not ending at an element in $X$. Note that $p=p(\N)$. Since the paths are vertex disjoint and partition $V$, the set of paths forms a spanning sub-forest of $\N$. So, by adding, for each path, one edge of $\N$ directed into the starting vertex, we construct a rooted spanning tree $T$ of $\N$. The leaves of $T$ not in $X$ are precisely the last vertices of the paths $\pi_1, \pi_2, \ldots, \pi_p$. Since there are $p(\N)$ of these paths, it follows that $l(\N)\leq p(\N)$.

\blue{We next show that $p(\N)\le t(\N)$. Let $\N'$ be a tree-based network that is obtained from $\N$ by attaching $t(\N)$ leaves. Let $T$ be a base tree for $\N'$, and let $U$ denote the leaf set of $T$. If we now apply Lemma~\ref{l:trees.M} with the same choice of $U$ and $T$, then $T$ can be partitioned into at most $|U|=t(N)+|X|$ paths each of which ends at an element in $U$. Thus $p(\N)\le t(\N)$.}

\blue{Lastly, we show that $t(\N)\le l(\N)$. Let $T$ be a rooted spanning tree of $\N$ that realises $l(\N)$. For each leaf $\ell$ of $T$ that is not in $X$, attach a new leaf to an edge directed out of $\ell$. If $\ell$ is a tree vertex of $\N$, then choose arbitrarily one of the outgoing edges to attach the new leaf. Let $\N'$ denote the resulting phylogenetic network. Since $T$ is a rooted spanning tree of $\N$, it is easily seen that we can extend $T$ to give a rooted spanning tree of $\N'$ whose leaf set coincides with the leaf set of $\N'$. Hence $\N'$ is tree-based, and it follows that $t(\N)\le l(\N)$. This completes the proof of the theorem.}
\end{proof}

\subsection{Computational complexity and explicit constructions}

We now consider the time to compute each of the three measures. Let $\N$ be a phylogenetic network on $X$ and let $n$ denote the total number of vertices in $\N$. Since each vertex in $\N$ has degree at most three, $\N$ has $O(n)$ edges. By Theorem~\ref{matchthm},
\blue{$$l(\N)=p(\N)=t(\N)$$}
and, by Lemma~\ref{unmatched},
$$p(\N)=u(\G_{\N})-|X|.$$
Thus $l(\N)$, $p(\N)$, \blue{and $t(\N)$} can all be computed by finding a maximum-sized matching in $\G_{\N}$. Since $\G_{\N}$ has $2n$ vertices and the same number of edges as $\N$, we can find such a matching in time $O\left(n^{3/2}\right)$~\cite{hop73}. Furthermore, as $\G_{\N}$ can be constructed in time polynomial in $n$, we can compute $l(\N)$, $p(\N)$, \blue{and $t(\N)$} in time polynomial in $n$.



The proofs of Lemma~\ref{unmatched} \blue{and} Theorem~\ref{matchthm} implicitly establish how one can construct a rooted spanning tree, a system of vertex disjoint paths, and a tree-based network realising $l(\N)$, $p(\N)$, and $t(\N)$. We end this section with explicit algorithms for each of these constructions. Their correctness is omitted as this is essentially done in the proofs of these results. The input to each algorithm is a phylogenetic network $\N$ on $X$. The first algorithm constructs a minimum-sized set $\cP$ of vertex disjoint paths that partition the vertices of $\N$.

\noindent{\sc Vertex Disjoint Paths} ($\N$)
\begin{enumerate}[1.]
\item Construct $\G_{\N}$ and find a maximum-sized matching of $\G_{\N}$.

\item Let $U_2$ denote the set of unmatched vertices in $V_2$.

\item For each $u_0\in U_2$, find the unique maximal sequence
$$(u_0, u_1), (u_1, u_2), (u_2, u_3), \ldots, (u_{k-1}, u_k)$$
of matched edges in $\G_{\N}$ and set $P_{u_0}$ to be the path $u_0\, u_1\, u_2\, \cdots\, u_k$.

\item Set $\cP=\{P_{u_0}: u_0\in U_2\}$ and return $\cP$.
\end{enumerate}

The second algorithm constructs a rooted spanning tree $T$ of $\N$ that minimises the number of leaves in $V-X$, where $V$ is the vertex set of $\N$.

\noindent{\sc Rooted Spanning Tree} ($\N$)
\begin{enumerate}[1.]
\item Let $\cP$ be the set of vertex disjoint paths returned by a call to {\sc Vertex Disjoint Paths}~($\N$).

\item Let $\pi_{\rho}$ denote the path in $\cP$ traversing the root $\rho$ of $\N$.

\item For each path $\pi=u_0\, u_1\, u_2\, \cdots\, u_k$ in $\cP-\{\pi_{\rho}\}$, extend $\pi$ to
$$\pi'=w\, u_0\, u_1\, u_2\, \cdots\, u_k,$$
where $(w, u_0)$ is an edge in $\N$.

\item Set $E_{\pi_{\rho}}$ to be the edge set of $\pi_{\rho}$ and, for all $\pi\in \cP-\{\pi_{\rho}\}$, set $E_{\pi'}$ to the edge set of $\pi'$.

\item Set $T=(V, E)$, where
$$E=E_{\pi_{\rho}}\bigcup_{\pi\in \cP-\{\pi_{\rho}\}}E_{\pi'}$$
and return $T$.
\end{enumerate}

The last algorithm constructs a tree-based network $\N'$ from $\N$ by attaching $t(\N)$ leaves.

\noindent{\sc Tree-Based Network} ($\N$)
\begin{enumerate}[1.]
\item \blue{Let $T$ be the rooted spanning tree returned by a call to {\sc Rooted Spanning Tree~($\N$)}.}

\item \blue{Let $L$ denote the subset of leaves in $T$ not in $X$.}

\item \blue{For each leaf $\ell\in L$, attach a new leaf to an edge directed out of $\ell$ in $\N$.}

\item \blue{Set $\N'$ to be the resulting network and return $\N'$.}
\end{enumerate}

%
%
%

\section{Discussion and Further Questions}
\label{ending}

In this paper, we have established several new characterisations of tree-based networks based on notions of antichains, path partitions and bipartite matchings. We then applied these results in Section~\ref{deviation} to define and analyse \blue{three equivalent} and computable measures that quantify how close to being tree-based an arbitrary network is.

There are other ways to quantify the extent to which an arbitrary phylogenetic network deviates from being tree-based;  two particular alternative  measures are the following:
\begin{enumerate}[1.]
\item The minimum number of vertices in $\N$ that are required to be absent from any rooted tree that is embedded in $\N$ and has the same leaf set as $\N$.

\item The minimum number of rooted trees with the same leaf set as $\N$ that are required to be embedded in $\N$ in order that every vertex of $\N$ is present in at least one of the trees. 
\end{enumerate}
The first measure is non-negative and equal to zero if and only if $\N$ is tree-based, while the second measure is at least $1$, and equal to $1$
if and only if $\N$ is tree-based.
Determining the computational complexity of computing these two measures seems an interesting topic for future work. 

A further project would be to generalise our results concerning the three deviation indices to the larger class of  {\em non-binary} phylogenetic networks, in which vertices are allowed to have  in-degree and out-degree greater than two. There are at least two natural ways to extend the definition of tree-based to this more general class of networks, as described and studied in \cite{jet16}.



%
%

\begin{thebibliography}{99}
\bibitem{ana16} M.\ Anaya, O.\ Anipchenko-Ulaj, A.\  Ashfaq, J.\ Chiu, M.\ Kaiser, M.S.\ Ohsawa, M.\ Owen, E.\ Pavlechko, K.\ St John, S.\ Suleria, K.\ Thompson, C.\ Yap, On determining if tree-based networks contain fixed trees, Bull.\ Math.\ Biol. 78 (2016) 961--969.

\bibitem{bar06} M.\ Baroni, C.\ Semple, M.\ Steel, Hybrids in real time, Syst.\ Biol.\ 55 (2006) 46--56.

\bibitem{bon08} \blue{J.A.\ Bondy, U.S.R.\ Murty, Graph Theory, Springer, New York, 2008.}

\bibitem{dil50} \blue{R.P.\ Dilworth, A decomposition theorem for partially ordered sets, Ann.\ Math.\ 51 (1950) 161--166.}

\bibitem{fra15} A.R.\ Francis, M.\ Steel, Which phylogenetic networks are merely trees with additional arcs?, Syst.\ Biol.\ 64 (2015) 768--777.

\bibitem{hal35} \blue{P.\ Hall. On representatives of subsets, J.\ London Math.\ Soc.\ 10 (1935) 26--30.}

\bibitem{hay16} \blue{M.\ Hayamizu, On the existence of infinitely many universal tree-based networks, J.\ Theor.\ Biol.\ 396 (2016) 204--206.}

\bibitem{hop73} J.E.\ Hopcroft, R.M.\ Karp, An $n^{5/2}$ algorithm for maximum matching in bipartite graphs, SIAM J.\ Comput.\ 2 (1973) 225--231.

\bibitem{jet16} L.\ Jetten, L.\ van Iersel, Nonbinary tree-based phylogenetic networks, IEEE/ACM Trans.\ Comput.\ Biol.\ Bioinform., in press.

\bibitem{men27} \blue{K.\ Menger, Zur allgemeinen Kurventheorie, Fund.\ Math.\ 10 (1927) 96--115.}

\bibitem{sem16} C.\ Semple, Phylogenetic networks with every embedded phylogenetic tree a base tree, Bull.\ Math.\ Biol.\ 78 (2016) 132--137.

\bibitem{zha16} L.\ Zhang, On tree-based phylogenetic networks, J.\ Comput.\ Biol.\ 23 (2016) 553--565.
\end{thebibliography}

\end{document}